\renewcommand{\linenopax}{~\par\vspace{-2mm}} 
\newcommand{\LapK}{{\ensuremath{\Lap_{\energy}^{(Kr)}}}\xspace}     
\newcommand{\dgl}{\mspace{3mu}\operatorname{d}\negsp[2]\gl}
\newcommand{\dgm}[1][]{\mspace{3mu}\operatorname{d}\negsp[2]\gm_{#1}}
\numberwithin{equation}{section}
\numberwithin{theorem}{section}
\title[Symmetric pairs and self-adjoint extensions of operators]{Symmetric pairs and self-adjoint extensions of operators, with applications to energy networks}
\author{Palle E. T. Jorgensen and Erin P. J. Pearse} 
\address{University of Iowa, Iowa City, IA 52246-1419 USA \\ \qq\texttt{palle-jorgensen@uiowa.edu}}
\address{California Polytechnic University, San Luis Obispo, CA 93407-0403 USA \\ \qq\texttt{epearse@calpoly.edu}}
\date{\textbf{Version of \textbf{\currenttime} on  \textbf{\longdate{\today}}}}
\subjclass[2010]{05C50, 05C63, 46B22, 46E22, 47B15, 47B25, 47B32, 47B39, 60J10}
\keywords{
  Graph energy, graph Laplacian, spectral graph theory, resistance network, effective resistance, Hilbert space, reproducing kernel, unbounded linear operator, self-adjoint extension, Friedrichs extension, Krein extension, essentially self-adjoint, spectral resolution, defect indices, symmetric pair. 
}
\begin{document}

\maketitle

\begin{abstract}
We provide a streamlined construction of the Friedrichs extension of a densely-defined self-adjoint and semibounded operator $A$ on a Hilbert space $\mathcal{H}$, by means of a symmetric pair of operators. A \emph{symmetric pair} is comprised of densely defined operators $J: \mathcal{H}_1 \to \mathcal{H}_2$ and $K: \mathcal{H}_2 \to \mathcal{H}_1$ which are compatible in a certain sense. With the appropriate definitions of $\mathcal{H}_1$ and $J$ in terms of $A$ and $\mathcal{H}$, we show that $(JJ^\star)^{-1}$ is the Friedrichs extension of $A$. Furthermore, we use related ideas (including the notion of unbounded containment) to construct a generalization of the construction of the Krein extension of $A$ as laid out in a previous paper of the authors. These results are applied to the study of the graph Laplacian on infinite networks, in relation to the Hilbert spaces $\ell^2(G)$ and $\mathcal{H}_{\mathcal E}$ (the energy space).
\end{abstract}

\allowdisplaybreaks

\section{Introduction}

Motivated by Laplace operators on infinite networks and their self-adjoint extensions, we consider the situation in which a certain two different Hilbert spaces contain a common linear subspace: $V \ci \sH_1 \cap \sH_2$. We study (possibly unbounded) from $\sH_i$ to $\sH_j$ in terms of whether or not $V$ is dense in one or both Hilbert spaces. In particular, we introduce the notion of a symmetric pair (see Definition~\ref{def:closable-pair}) of operators: when the densely defined operators $A: \sH_1 \to \sH_2$ and $B: \sH_2 \to \sH_1$ are compatible (i.e., there exists a suitable relation with their adjoints), then we immediately obtain that both are closable: see Lemma~\ref{thm:closable-pair}. In the present context, this can be applied to the operator $J:V \to \sH_2$ defined by $J\gf = \gf$, which (as function on sets) is the inclusion map. This provides for a very concise construction of the Friedrichs extension of a semibounded operator $A:\dom(A) \ci \sH \to \sH$. We use $A$ to define a new and strictly finer topology on \sH so that $J:V \to \sH$ is a contractive (inclusion) embedding, and then the key result Theorem~\ref{thm:Fried} yields the Friedrichs extension as $A_\sF = (JJ^\ad)^{-1}$.
Our next main results is Theorem~\ref{thm:Krein-generalization}, in which we leverage symmetric pairs to prove a generalization of Krein's extension results. In a forthcoming work, we use these ideas to describe a construction of the Krein extension \cite{Krein}, applications to reflection positivity in physics \cite{DualityRP}, construct a noncommutative analogue of the Lebesgue--Radon--Nikodym decomposition \cite{Charp} (see also \cite{Krein} and \cite{DualityRP}), and also to verify closability and compute adjoints of unbounded operators arising in the context of stochastic calculus (Malliavin derivative) and the study of von Neumann algebras (Tomita-Takesaki theory) \cite{SPandGHS}.

We further apply the results described above to discrete Laplace operators on infinite networks. Here, a network is just an connected undirected weighted graph $(G, c)$ (see Definition~\ref{def:network}), and the associated network Laplacian \Lap acts on functions $u:G \to \bR$; see Definition~\ref{def:graph-laplacian}. 
We restrict attention to the case when the network is \emph{transient}%
	\footnote{This is equivalent to assuming the existence of \emph{monopoles}; see Definition~\ref{def:dipole} and Remark~\ref{rem:monotransience}.}, 
	and we are particularly interested in the case when \Lap is unbounded, in which case some care must be taken with the domains. We consider \Lap separately as an operator on \HE, the Hilbert space of finite energy functions on $G$ and as on operator on $\ell^2(G)$. 
	Although the two operators agree formally, their spectral theoretic properties are quite different.
    The space \HE is defined in terms of the quadratic form \energy, which gives the Dirichlet energy of a function $u$; see Definition~\ref{def:H_energy}. By $\ell^2(G)$, we mean the unweighted space of square-summable functions on $G$ under counting measure; see Definition~\ref{def:ell2}. 

Neither of the two Hilbert spaces is contained in the other, and the two Hilbert norms do not compare. It follows that the spectral theory is quite different for the corresponding incarnations of the Laplacian: as an operator on $\ell^2(G)$ (Definition~\ref{def:domLap2}) and as an operator on \HE (Definition~\ref{def:domLapE}). We will use the respective notation $\Lap_2$ and \LapE to refer to these two very distinct incarnations of the Laplacian. Common to the two is that each is defined on its natural dense domain in each of the Hilbert spaces $\ell^2(G)$ and \HE, and in each case it is a Hermitian and non-negative operator. 
However, it is known from \cite{SRAMO} (see also \cite{Woj07, KellerLenz09, KellerLenz10}) that \Lap is essentially self-adjoint on its natural domain in $\ell^2(G)$ but in \cite{SRAMO} it is shown that \Lap is \emph{not} essentially self-adjoint on its natural domain in \HE (see Definition~\ref{def:domLapE}). Nonetheless, we prove that the Friedrich extension of the latter has a spectral theory that can be compared with the former. 

\subsection{Historical context and motivation}
    The importance of the Friedrichs extension of an unbounded Hermitian operator on a Hilbert space stems from its role in the classical theory. The network Laplace operator considered in this article is a discrete analogue of the better known Laplace operator associated to a manifold with boundary in harmonic analysis and PDE theory, see for example \cite{Fr35, Fr39} and the endnotes of \cite[Ch.~XII]{DuSc88}. In classical applications from mathematical physics, this Laplacian is an unbounded operator initially defined on a domain of smooth functions vanishing on the boundary. To get a self-adjoint operator in $L^2$ (and an associated spectral resolution), one then assigns boundary conditions. Each distinct choice yields a different self-adjoint extension (realized in a suitable $L^2$-space). The two most famous such boundary conditions are the Neumann and the Dirichlet conditions. In the framework of unbounded Hermitian operators in Hilbert space, the Dirichlet boundary conditions correspond to a semibounded self-adjoint extension of \Lap called the Friedrichs extension. For boundary value problems on manifolds with boundaries, the Hermitian property comes from a choice of a minimal domain for the given elliptic operator $T$ under consideration, and the semiboundedness then amounts to an a priori coercivity estimate placed as a condition on $T$. 

Today, the notion of a Friedrichs extension is typically understood in a more general operator theoretic context concerning semibounded Hermitian operators with dense domain in Hilbert space, see e.g., \cite[p.1240]{DuSc88}. In its abstract Hilbert space formulation, it throws light on a number of classical questions in spectral theory, and in quantum mechanics, for example in the study of Sturm-Liouville operators and Schr\"odinger operators, e.g., \cite{Kat95}. If a Hermitian operator is known to be semibounded, we know by a theorem of von Neumann that it will automatically have self-adjoint extensions.%
\footnote{The term ``extension'' here refers to containment of the respective graphs of the operators under consideration.}
  The selection of appropriate boundary conditions for a given boundary value problem corresponds to choosing a particular self-adjoint extension of the partial differential operator in question. In general, some self-adjoint extensions of a fixed minimal operator $T$ may be semibounded and others not. \emph{The Friedrichs extension is both self-adjoint and semibounded, and with the same lower bound as the initial operator $T$ (on its minimal domain).}

We are here concerned with a different context: analysis and spectral theory of problems in discrete context,
  wherein \Lap is the infinitesimal generator of the random walk on $(G,c)$.   In this regard, we are motivated by a number of recent papers, some of which are cited  above.
  A desire to quantify the asymptotic behavior of such reversible Markov chains leads to the need for precise and useful notions of boundaries of infinite graphs. Different conductance functions lead to different Laplacians \Lap, and also to different boundaries. In the energy Hilbert space \HE, this operator \Lap will then have a natural dense domain turning it into a semibounded Hermitian operator, and as a result, Friedrichs' theory applies.
%
As in classical Riemannian geometry, one expects an intimate relationship between metrics and associated Laplace operators. 
This is comparable to the use of the classical Laplace operator in the study of manifolds with boundary, or even just boundaries of open domains in Euclidean space, see e.g., \cite{Fug91, Fug86}.

\section{Symmetric pairs}

Self-adjoint extensions of unbounded operators may be studied via symmetric pairs. See also \cite{Charp, SPandGHS, Krein, DualityRP} for further applications of symmetric pairs to the closability of operators and computation of adjoints.

\begin{defn}\label{def:closable-pair}
	Suppose $\sH_1$ and $\sH_2$ are Hilbert spaces and $A,B$ are operators with dense domains $\dom A \ci \sH_1$ and $\dom B \ci \sH_2$ and
	\begin{align*}
		A: \dom A \ci \sH_1 \to \sH_2
		\qq\text{and}\qq
		B: \dom B \ci \sH_2 \to \sH_1.
	\end{align*}
	We say that $(A,B)$ is a \emph{symmetric pair} iff 
	\begin{align}\label{eqn:closable-pair}
		\la A \gf, \gy \ra_{\sH_2} = \la \gf, B\gy \ra_{\sH_1},
		\qq\text{for all } \gf \in \dom A, \gy \in \dom B. 
	\end{align}
	In other words, $(A,B)$ is a symmetric pair iff $A \ci B^\ad$ and $B \ci A^\ad$.
\end{defn}

\begin{lemma}\label{thm:closable-pair}
	If $(A,B)$ is a symmetric pair, then $A$ and $B$ are each closable operators. Moreover, 
	\begin{enumerate}
	\item $A^\ad \cj A$ is densely defined and self-adjoint with $\dom A^\ad \cj A \ci \dom \cj A \ci \sH_1$, and 
	\item $B^\ad \cj B$ is densely defined and self-adjoint with $\dom B^\ad \cj B \ci \dom \cj B \ci \sH_2$. 
	\end{enumerate}
\end{lemma}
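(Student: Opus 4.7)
My plan is to deduce the two assertions from two standard facts: (i) a densely defined operator $T$ on a Hilbert space is closable iff $T^\ad$ is densely defined, and then $\cj T = T^{\ad\ad}$; and (ii) von Neumann's theorem, which says that for any closed, densely defined $T:\sH_1\to\sH_2$, the operator $T^\ad T$ is densely defined, self-adjoint, and non-negative on $\sH_1$.

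For closability, I would exploit the equivalent formulation from Definition~\ref{def:closable-pair}: $(A,B)$ a symmetric pair means $A\ci B^\ad$ and $B\ci A^\ad$. In particular, $A^\ad\supset B$, and since $B$ has dense domain in $\sH_2$, $A^\ad$ is densely defined, so by fact (i) $A$ is closable and $\cj A = A^{\ad\ad}$. Symmetrically, $B^\ad\supset A$ has dense domain in $\sH_1$, so $B$ is closable with $\cj B = B^{\ad\ad}$.

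For the second part, the closure $\cj A:\dom\cj A\ci\sH_1\to\sH_2$ is closed and densely defined. By fact (ii) applied with $T=\cj A$, the operator $(\cj A)^\ad \cj A$ is densely defined and self-adjoint on $\sH_1$, with $\dom (\cj A)^\ad \cj A \ci \dom \cj A \ci \sH_1$ directly from the definition of the composition. Now I would invoke the standard identity $(\cj A)^\ad = A^\ad$ (valid whenever $A$ is closable), which rewrites $(\cj A)^\ad \cj A = A^\ad \cj A$, yielding assertion (1). Assertion (2) follows by repeating the argument with the roles of $A$ and $B$ interchanged.

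I do not anticipate a genuine obstacle: the work is almost entirely in correctly applying the standard closability/adjoint calculus. The only subtle point to state carefully is the identity $(\cj A)^\ad = A^\ad$, so I would either cite it explicitly or include a one-line justification using $\cj A = A^{\ad\ad}$ and $A^{\ad\ad\ad} = A^\ad$.
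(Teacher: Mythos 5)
Your proposal is correct and follows essentially the same route as the paper: dense definedness of $A^\ad$ and $B^\ad$ from the containments $A\ci B^\ad$, $B\ci A^\ad$ gives closability, and von Neumann's theorem on $T^\ad T$ for closed densely defined $T$ gives the self-adjointness claims. You simply spell out the step $(\cj A)^\ad = A^\ad$ that the paper leaves implicit.
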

\begin{proof}
	Since $A$ and $B$ are densely defined and $A \ci B^\ad$ and $B \ci A^\ad$, it is immediate that $A^\ad$ and $B^\ad$ are densely defined; it follows by a theorem of von Neumann that $A$ and $B$ are both closable. 
	By another theorem of von Neumann,  $A^\ad \cj A$ is self-adjoint; cf.~\cite[Thm.~13.13]{Rud91}.
\end{proof}

\begin{remark}
	Observe that by Lemma~\ref{thm:closable-pair}, there is a partial isometry $V:\sH_1 \to \sH_2$ such that $\cj A = V(A^\ad\cj A)^{1/2} = (B^\ad \cj B)^{1/2} V$. In particular, 
	\begin{align*}
		\spec_{\sH_1} (A^\ad \cj A) \less \{0\} = \spec_{\sH_2} (B^\ad \cj B) \less \{0\}.
	\end{align*}
\end{remark}

\begin{remark}
	Whenever $(A,B)$ is a symmetric pair, we may now assume (by Lemma~\ref{thm:closable-pair}) that $A$ and $B$ are closed operators. In the sequel, we will thus refer to the self-adjoint operators $A^\ad A$ and $B^\ad B$.
\end{remark}

The following example illustrates the relationship that can exist between the adjoint of an operator between $L^2$ spaces, and the Radon-Nikodym derivative of their respective measures. We return to this theme in Example~\ref{exm:RN} and in the forthcoming work \cite{Charp}; see also \cite{Jor80b, Ota88, Hassi07}

\begin{exm}\label{exm:trivial-adjoint}
	Let $X=[0,1]$, and consider $L^2(X,\gl)$ and $L^2(X,\gm)$ for measures \gl and \gm which are mutually singular. For concreteness, let \gl be Lebesgue measure, and let \gm be the classical singular continuous Cantor measure. Then the support of \gm is the middle-thirds Cantor set, which we denote by $K$, so that $\gm(K)=1$ and $\gl(X\less K)=1$. The continuous functions $C(X)$ are a dense subspace of both $L^2(X,\gl)$ and $L^2(X,\gm)$ (see, e.g. \cite[Ch.~2]{Rud87}). Define the ``inclusion'' operator\footnote{As a map between sets, $J$ is the inclusion map $C(X) \hookrightarrow L^2(X,\gm)$. However, we are considering $C(X) \ci L^2(X,\gl)$ here, and so $J$ is not an inclusion map between Hilbert spaces because the inner products are different. Perhaps ``pseudoinclusion'' would be a better term.} $J$ to be the operator with dense domain $C(X)$ and
\linenopax
\begin{align}\label{eqn:L2-inclusion-exm}
	J:C(X) \ci L^2(X,\gl) \to L^2(X,\gm)
	\qq\text{by}\qq
	J \gf = \gf.
\end{align}
We will show that $\dom J^\ad = \{0\}$, so suppose $f \in \dom J^\ad$. Without loss of generality, one can assume $f \geq 0$ by replacing $f$ with $|f|$, if necessary.
By definition, $f \in \dom J^\ad$ iff there exists $g \in L^2(X,\gl)$ for which
\linenopax
\begin{align}\label{eqn:adj-exm-defn}
	\la J\gf,f\ra_{\gm} = \int_X \cj\gf f \dgm = \int_X \cj\gf g \dgl = \la \gf, g \ra_{\gl},
	\qq \text{for all }\gf \in C(X). 
\end{align}
One can choose $(\gf_n)_{n =1}^\iy \ci C(X)$ so that $\gf_n|_{K}=1$ and $\lim_{n \to \iy} \int_X \gf_n \dgl = 0$ by considering the appropriate piecewise linear modifications of the constant function 1. For example, see Figure~\ref{fig:cantor-collapse}.
	\begin{figure}[b]
		\begin{centering}
		\scalebox{0.7}{\includegraphics{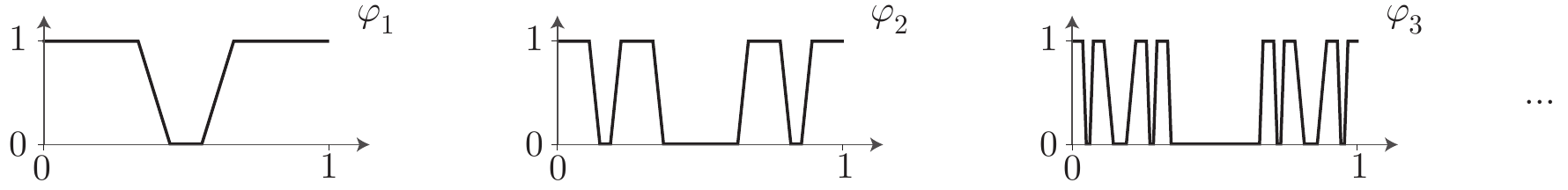}} 
		\end{centering}
		\caption{\captionsize A sequence $\{\gf_n\} \ci C(X)$ for which $\gf_n|_K=1$ and $\lim \int_X \gf_n \dgl =0$. See Example~\ref{exm:trivial-adjoint}.}
		\label{fig:cantor-collapse}
	\end{figure}
Now we have 
\linenopax
\begin{align}\label{eqn:adj-exm-defn}
	\la \gf_n, J^\ad f \ra_{\gl}
	= \la \gf_n, f \ra_{\gm} 
	= \la 1, f \ra_{\gm} 
	= \int_X |f| \dgm,
	\qq\text{for any } n,
\end{align}
but $\lim_{n \to \iy} \int_X \gf_n g \dgl = 0$ for any continuous $g \in L^2(X,\gl)$. 
Thus $\int_X |f| \dgm = 0$, so that $f = 0$ \gm-a.e.
In other words, $f =0 \in L^2(X,\gm)$ and hence $\dom J^\ad = \{0\}$, which is certainly not dense! Thus, one can interpret the adjoint of the inclusion as multiplication by a Radon-Nikodym derivative (``$J^\ad f  = f\frac{\dgm}{\dgl}$''), which must be trivial when the measures are mutually singular. This comment is made more precise in Example~\ref{exm:RN} and Corollary~\ref{thm:RN-connection}.
As a consequence of this extreme situation, the inclusion operator in \eqref{eqn:L2-inclusion-exm} is not closable.
\end{exm}

\section{The Friedrichs extension}
\label{sec:Fried-intro}

For a large class of symmetric operators, there is a canonical choice for a self-adjoint extension, the Friedrichs extension. 

\begin{remark}
The importance of the Friedrichs extension of an unbounded Hermitian operator on a Hilbert space stems from its role in the classical theory (and mathematical physics in particular). 
For example, consider the Laplace operator \gD defined initially on $C_0^\iy(\gW)$, where \gW is a regular open subset of \bRn for which $\cj{\gW}$ is compact. Thus, $\dom \gD$ consists of smooth functions vanishing at the boundary of \gW. To get a self-adjoint operator in the Hilbert space $\sH=L^2(\gW)$ (and an associated spectral resolution), one then assigns boundary conditions; each distinct choice yields a different self-adjoint extension. The two most famous choices of boundary conditions are the Neumann and the Dirichlet conditions. 

The Friedrichs extension procedure may be described abstractly, as the Hilbert completion of $\dom \gD$ with respect to a quadratic form defined in terms of \gD; cf.~\cite{Kat95, DuSc88}. Nonetheless, in the present example, the Friedrichs extension turns out to correspond to Dirichlet boundary conditions. (The Krein extension may also be defined abstractly, in terms of $\ker \gD^\ad$, turns out to correspond to Neumann conditions.) 
\end{remark}

	Consider an operator $A:\dom A \ci \sH \to \sH$, whose domain is dense in the Hilbert space \sH, and assume $A$ satisfies
	\linenopax
	\begin{align}\label{eqn:coercivity}
		\la \gf, A \gf \ra \geq \|\gf\|^2, \qq \text{for all } \gf \in \dom A.
	\end{align}
	Define $\sH_A$ to be the Hilbert completion of $\dom A$ with respect to the norm induced by
	\linenopax
	\begin{align}\label{eqn:A-norm}
		\la \gy, \gf \ra_A := \la \gy, A\gf\ra, \qq \gy,\gf \in \dom A,
	\end{align}
	and define the inclusion operator 
	\linenopax
	\begin{align}\label{eqn:Ji}
		J: \sH_A \hookrightarrow \sH, \qq\text{by}\qq J\gf = \gf, \q\text{for } \gf \in \sH_A.
	\end{align}

\begin{defn}\label{def:Friedrichs-extension}
	If $A$ is symmetric and nonnegative densely-defined operator, then the \emph{Friedrichs extension of $A$} is the operator $A_\sF$ with
	\linenopax
	\begin{align}\label{eqn:Fried}
		\dom A_\sF := \dom A^\ad \cap J(\sH_A),
		\q\text{and}\q
		A_\sF \gf = A \gf, \text{ for } \gf \in \dom A.
	\end{align}
	Here, as usual,
	\linenopax
	\begin{align}\label{eqn:dom(adj)}
		\dom A^\ad := \{\gy \in \sH \suth \exists C<\iy \text{ with } |\la \gy,A\gf \ra| \leq C \|\gf\|, \text{ for all } \gf \in \dom A\}.
	\end{align}

\end{defn}

\begin{theorem}\label{thm:Fried}
	The operator $(JJ^\ad)^{-1}$ is the Friedrichs extension of $A$.
\end{theorem}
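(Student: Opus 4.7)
My plan is to regard $J$ as a bounded contraction $\sH_A \to \sH$, show $T := JJ^\ad$ is bounded, self-adjoint, nonnegative, and injective on $\sH$, and then match its (unbounded) inverse $T^{-1}$ against the Friedrichs definition \eqref{eqn:Fried} by a direct adjoint computation. The coercivity \eqref{eqn:coercivity} gives $\|\gf\|_\sH \leq \|\gf\|_A$ on $\dom A$, so $J$ extends by density to a contraction in $B(\sH_A, \sH)$. A short Cauchy-sequence argument shows $J$ is injective: if $(\gf_n) \ci \dom A$ converges to $\gz$ in $\sH_A$ but to $0$ in $\sH$, then $\la \gz, \gf_m\ra_A = \lim_n \la \gf_n, A\gf_m\ra_\sH = 0$ for each $m$, and density of $\dom A$ in $\sH_A$ forces $\gz = 0$. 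Consequently $J^\ad \in B(\sH,\sH_A)$ has dense range, $T = JJ^\ad$ is bounded, self-adjoint, and nonnegative, and the identity $\la T\gy,\gy\ra_\sH = \|J^\ad\gy\|_A^2$ together with density of $J(\sH_A)$ in $\sH$ forces $T$ to be injective. Thus $T^{-1}$ is a densely-defined, self-adjoint operator on $\sH$.

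The heart of the argument is a single adjoint identity: for $\gz,\gx \in \dom A$,
\begin{align*}
\la J^\ad A\gz,\gx\ra_A = \la A\gz, J\gx\ra_\sH = \la A\gz,\gx\ra_\sH = \la \gz,\gx\ra_A,
\end{align*}
which by density of $\dom A$ in $\sH_A$ yields $J^\ad A\gz = \gz$ in $\sH_A$ and hence $TA\gz = \gz$ in $\sH$. In particular $T^{-1}$ extends $A$.

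To match $T^{-1}$ with $A_\sF$ I argue in two directions. For arbitrary $\gy \in \sH$ and $\gx \in \dom A$, applying $J^\ad A\gx = \gx$,
\begin{align*}
\la T\gy, A\gx\ra_\sH = \la J^\ad \gy, J^\ad A\gx\ra_A = \la J^\ad \gy, \gx\ra_A = \la \gy, J\gx\ra_\sH = \la \gy,\gx\ra_\sH,
\end{align*}
so $T\gy \in \dom A^\ad$ with $A^\ad(T\gy) = \gy$; since $T\gy = J(J^\ad\gy) \in J(\sH_A)$, one concludes $T\gy \in \dom A_\sF$ and $A_\sF T = I_\sH$, i.e., $T^{-1} \ci A_\sF$. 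For the reverse containment, take $\gz \in \dom A_\sF$, write $\gz = J\gf$ for the unique $\gf \in \sH_A$, set $\gy := A^\ad\gz$, and compute $\la J^\ad\gy,\gx\ra_A = \la\gy,\gx\ra_\sH = \la\gz, A\gx\ra_\sH = \la\gf,\gx\ra_A$ for all $\gx \in \dom A$; this forces $J^\ad\gy = \gf$, whence $\gz = J\gf = T\gy \in \text{range}\,T$ with $T^{-1}\gz = \gy = A_\sF\gz$. The two inclusions combine to give $(JJ^\ad)^{-1} = A_\sF$.

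The main obstacle is the bookkeeping---juggling two Hilbert norms and two distinct adjoint relations (for $J$ and for $A$)---and verifying that $J$ is injective so that the formula $\dom A_\sF = \dom A^\ad \cap J(\sH_A)$ is unambiguous. Once $J$ is injective, the self-adjointness of $T^{-1}$ is automatic from bounded self-adjointness of $T$, obviating any separate check that the constructed extension is self-adjoint.
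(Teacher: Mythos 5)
Your proof is correct and follows essentially the same route as the paper: realize $J$ as a contractive inclusion $\sH_A\to\sH$, derive the identity $JJ^\ad A\gf=\gf$ on $\dom A$, and then identify $\ran(JJ^\ad)$ with $\dom A^\ad\cap J(\sH_A)$. One point is worth highlighting: your argument for the reverse containment $\dom A_\sF\ci\ran(JJ^\ad)$ --- writing $\gz=J\gf$, setting $\gy=A^\ad\gz$, and showing $J^\ad\gy=\gf$ so that $\gz=JJ^\ad\gy$ --- is actually more complete than the paper's, which at the corresponding step assumes $\gy\in\dom A$ even though only $\gy\in\dom A^\ad\cap J(\sH_A)$ is given. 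Your additional verifications (injectivity of $J$, injectivity and dense range of $JJ^\ad$, hence self-adjointness of its inverse) are correct and make explicit what the paper leaves implicit; the only cosmetic slip is in the injectivity argument for $J$, where the pairing $\la\gz,\cdot\ra_A=0$ should be tested against an arbitrary $\gy\in\dom A$ (or one should pass to the limit $\la\gz,\gf_m\ra_A\to\la\gz,\gz\ra_A$) rather than only against the members of the approximating sequence, but the fix is immediate.
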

\begin{proof}
	(1) We first show that the operator $(JJ^\ad)^{-1}$ is a self-adjoint extension of $A$.
	The inclusion operator $J: \sH_A \hookrightarrow \sH$ is contractive because the estimate \eqref{eqn:coercivity} implies 
	\linenopax
	\begin{align}\label{eqn:norm-of-J}
		\|Jf\| = \|f\| \leq \|f\|_A, \qq \text{for all } f \in \sH.
	\end{align}
	From general theory, we know that $\|J^\ad\| = \|J\|$, so both $J$ and $J^\ad$ are contractive with respect to their respective norms, and hence $JJ^\ad:\sH \to \sH$ is also contractive. We deduce that $JJ^\ad$ is a contractive self-adjoint operator in \sH.
	
	Using the self-adjointness of $JJ^\ad$ and definitions \eqref{eqn:A-norm}--\eqref{eqn:Ji}, we have that the following holds for any $\gy, \gf \in \dom A$:
	\linenopax
	\begin{align}\label{eqn:JJad-identity}
		\la \gy, JJ^\ad A\gf \ra
		= \la JJ^\ad \gy, A\gf \ra
		= \la J^\ad \gy, A\gf \ra
		= \la J^\ad \gy, \gf \ra_A
		= \la \gy, J \gf \ra
		= \la \gy, \gf \ra. 
	\end{align}
	Since \eqref{eqn:JJad-identity} holds on the dense subset $\dom A$, we have
	\linenopax
	\begin{align}\label{eqn:JJad-inv-identity}
		JJ^\ad A \gf = \gf \q\text{for any}\q \gf \in \dom A.
	\end{align}
	and it follows immediately that $JJ^\ad$ is invertible on $\ran A$.
	\emph{A fortieri}, the identity \eqref{eqn:JJad-inv-identity} shows that $(JJ^\ad)^{-1}$ is an extension of $A$.

	(2) Next, we must show that $\ran JJ^\ad = \dom A^\ad \cap J(\sH_A)$.
	Let $\gy \in \ran JJ^\ad$. Then $\gy = JJ^\ad \gf$ for some $\gf \in \sH_A$, so $y \in J(\sH_A)$ is immediate. To see that $\gy \in \dom A^\ad$, note that for any $\gf \in \dom A$, part (1) of this proof gives
	\linenopax
	\begin{align}\label{eqn:psi-in-dom(adj)}
		|\la \gy, A\gf \ra| 
		= |\la JJ^\ad \gx, A\gf \ra| 
		= |\la \gx, JJ^\ad A\gf \ra| 
		= |\la \gx, \gf \ra| 
		\leq \|\gx\| \|\gf\|,
	\end{align}
	so the bound in \eqref{eqn:dom(adj)} is satisfied with $C = \|\gx\|$. This shows $\ran JJ^\ad \ci \dom A^\ad \cap J(\sH_A)$.
	
	Now for $\gy \in \dom A^\ad \cap J(\sH_A)$, we will prove the reverse containment. Since $\gy \in \dom A$, we have $\gy = JJ^\ad A\gy$ by part (1), so $\gy \in \ran JJ^\ad$.
\end{proof}

\begin{defn}
	A symmetric operator $A$ is \emph{semibounded} iff there is some $c>-\iy$ for which 
	\linenopax
	\begin{align}\label{eqn:semibounded}
		\la \gf, A\gf \ra
		\geq c \la \gf, \gf\ra, \q\text{for all } \gf \in \dom A.
	\end{align}
\end{defn}
	
\begin{defn}\label{def:Friedrichs-extension-semibounded}
	If $A$ is semibounded, then $A+c+1$ is a symmetric and nonnegative densely-defined operator satisfying \eqref{eqn:coercivity}, and the Friedrichs extension procedure may be applied to construct $(A+c+1)_\sF$ as in Definition~\ref{def:Friedrichs-extension}. 
The \emph{Friedrichs extension of $A$} is thus defined
	\linenopax
	\begin{align}\label{eqn:AF}
		A_\sF := (A+c+1)_\sF-c-1.
	\end{align}
\end{defn}

\begin{remark}\label{rem:}
	 While there are already several constructions of Friedrichs' extension (and corresponding proofs), we feel that our Theorem~\ref{thm:Fried} has attractive features, both novelty and simplicity. For example, Kato's approach \cite[\S2.3]{Kat95} depends on first developing a rather elaborate theory of closable forms, while by contrast, our proof is simple and direct. Additionally, the tools developed here are precisely those which we need in our analysis of the network Laplacian as a semibounded Hermitian operator with dense domain in \HE, the Hilbert space of functions of finite energy on a graph. For readers interested in earlier approaches to Friedrichs' extension, we refer to, for example the books by Dunford-Schwartz \cite{DuSc88}, by Kato \cite{Kat95}, and by Reed \& Simon \cite{ReedSimonI}. The following corollary shows that part of Kato's results can be recovered from Lemma~\ref{thm:closable-pair} and Theorem~\ref{thm:Fried}.
\end{remark}

\begin{cor}
	For a given Hilbert space \sH, there is a bijective correspondence between the collection of densely-defined closed quadratic forms $q$ which satisfy $q(\gf,\gf) \geq \|\gf\|^2$, and the collection of self-adjoint operators $A$ on \sH which satisfy $A \geq 1$. More precisely:
	\begin{enumerate}
	\item Given $A$, let $\dom q := \dom A^{1/2}$ and define
		\begin{align*}
			q(\gf, \gy) := \la A^{1/2} \gf, A^{1/2} \gf \ra, \qq \forall \gf,\gy \in \dom q.
		\end{align*}
	\item Given $q$, let $J: \dom q \to \sH$ be the inclusion map $J \gf = \gf$ and define $A := (JJ^\ad)^{-1}$.
	\end{enumerate}
\end{cor}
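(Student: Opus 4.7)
The plan is to verify (i) that the two constructions produce objects of the advertised types, and (ii) that they invert each other. Step (i) in direction (1) is immediate: functional calculus applied to $A\geq 1$ gives a self-adjoint $A^{1/2}\geq I$ with dense domain, so $q(\gf,\gy)=\la A^{1/2}\gf,A^{1/2}\gy\ra$ is densely defined, satisfies $q(\gf,\gf)\geq\|\gf\|^2$, and is closed because $A^{1/2}$ is. For step (i) in direction (2), let $\sH_q$ denote $\dom q$ equipped with $q$ as inner product (a Hilbert space by closedness and coercivity). The inclusion $J:\sH_q\to\sH$ is a contraction by coercivity, so, as in the proof of Theorem~\ref{thm:Fried}, $JJ^\ad$ is a bounded self-adjoint contraction on $\sH$. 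Injectivity of $JJ^\ad$ follows from density of $\ran J=\dom q$: if $JJ^\ad\gy=0$ then $\|J^\ad\gy\|_q^2=\la JJ^\ad\gy,\gy\ra=0$, hence $\gy\perp\dom q$ and so $\gy=0$. Therefore $A:=(JJ^\ad)^{-1}$ is densely defined, self-adjoint, and satisfies $A\geq 1$.

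For the composition $(2)\circ(1)$, I would start from $A\geq 1$, form $q_A$, and compute $(JJ^\ad)^{-1}$. Using the definition $\la\cdot,\cdot\ra_{q_A}=\la A^{1/2}\cdot,A^{1/2}\cdot\ra$ together with the identity $A^{1/2}A^{-1/2}=I$ on $\sH$, a short manipulation identifies $J^\ad=A^{-1}$ as an operator $\sH\to\sH_{q_A}$. Hence $JJ^\ad=A^{-1}$ on $\sH$, yielding $(JJ^\ad)^{-1}=A$.

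For the reverse composition $(1)\circ(2)$, I would start from $q$, form $A=(JJ^\ad)^{-1}$, and verify that the $q_A$ produced from this $A$ via (1) equals $q$. The cleanest route is polar decomposition: write $J=U|J|=|J^\ad|\,U$, where $|J|=(J^\ad J)^{1/2}$, $|J^\ad|=(JJ^\ad)^{1/2}$, and $U:\sH_q\to\sH$ is a partial isometry with initial space $\cj{\ran|J|}$ and final space $\cj{\ran J}$. Since $J$ is injective on $\sH_q$ (its kernel in $\sH$ is trivial) and $\ran J=\dom q$ is dense in $\sH$, the partial isometry $U$ is in fact unitary. Then $A^{1/2}=|J^\ad|^{-1}$ gives $\dom A^{1/2}=\ran|J^\ad|=\ran J=\dom q$, and the identity $J=|J^\ad|\,U$ yields $A^{1/2}\gf=U\gf$ for $\gf\in\dom q$; unitarity of $U$ then produces $\la A^{1/2}\gf,A^{1/2}\gy\ra_\sH=\la\gf,\gy\ra_{\sH_q}=q(\gf,\gy)$, so $q_A=q$.

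The main obstacle is the composition $(1)\circ(2)$, where one must identify $\dom A^{1/2}$ with $\dom q$ as subsets of $\sH$ and match the bilinear forms. The operator $A^{1/2}$ is defined abstractly by functional calculus on $A=(JJ^\ad)^{-1}$, whereas $\dom q$ is given intrinsically by the form, and matching them requires examining the structure of $J$. The key simplification is the promotion of the polar-decomposition partial isometry $U$ to a unitary, which is afforded precisely by our injectivity and density hypotheses and renders both the domain identification and the form identification essentially automatic.
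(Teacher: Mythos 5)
Your proposal is correct, and for the construction in direction (2) it runs along the same lines as the paper: the paper's entire proof is the remark that (1) is straightforward and that (2) ``follows immediately from Theorem~\ref{thm:Fried},'' i.e.\ from the contractivity of $J$ and the resulting self-adjointness and injectivity of $JJ^\ad$, which is exactly your step (i). Where you genuinely go beyond the paper is in verifying that the two assignments are mutually inverse, which the paper does not address at all even though it is the substance of the claimed \emph{bijective} correspondence. Your computation $J^\ad = A^{-1}$ for the composition $(2)\circ(1)$ is clean, and your use of the polar decomposition $J = |J^\ad|\,U$ for the harder composition $(1)\circ(2)$ --- promoting $U$ to a unitary via injectivity of $J$ and density of $\ran J = \dom q$, so that $A^{1/2} = |J^\ad|^{-1}$ acts as $U$ on $\dom q$ --- is a key lemma nowhere present in the paper. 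The paper's approach buys brevity by leaning on Theorem~\ref{thm:Fried}; yours buys an actual proof of bijectivity, at the cost of importing the polar decomposition for unbounded closed operators between distinct Hilbert spaces. One small point worth making explicit in your write-up: the identification $A^{1/2} = |J^\ad|^{-1}$ uses uniqueness of the positive square root of the self-adjoint operator $(JJ^\ad)^{-1} = (|J^\ad|^{-1})^2$, and the domain identity $\dom A^{1/2} = \ran|J^\ad| = \ran J$ relies on $U$ being onto $\sH_q$, which is exactly what your unitarity argument supplies.
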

\begin{proof}
	The proof of (1) is straightforward; the nontrivial direction of the correspondence is (2), but this follows immediately from Theorem~\ref{thm:Fried}.
\end{proof}

\section{A generalization of the Krein construction}

The following result is used to generalize some results of \cite{Krein}. It also offers a more streamlined proof; see Corollary~\ref{thm:sa-extensions} and Remark~\ref{rem:Krein}.

\begin{theorem}\label{thm:Krein-generalization}
	Suppose that $\sH_1$ and $\sH_2$ are Hilbert spaces with $\sD \ci \sH_1 \cap \sH_2$, and that $\sD$ is dense in $\sH_1$ (but not necessarily in $\sH_2$). Define $\sD^\ad \ci \sH_2$ by 
	\begin{align}\label{eqn:D*}
		\sD^\ad := \{h \in \sH_2 \suth \exists C \in(0,\iy) 
			\text{ for which } |\la \gf, h \ra_{\sH_2}|  \leq C\|\gf\|_{\sH_1}, 
			\;\forall \gf \in \sD\}.
	\end{align}
	Then $\sD^\ad$ is dense in $\sH_2$ if and only if there exists a self-adjoint operator \gL in $\sH_1$ with $\sD \ci \dom \gL$ and 
	\begin{align}\label{eqn:gL-identity}
		\la \gf, \gL \gf\ra_{\sH_1} = \|\gf\|_2^2, \qq \text{for all } \gf \in \sD.
	\end{align}
\end{theorem}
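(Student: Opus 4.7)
The plan is to build the whole argument around the densely-defined inclusion operator $J\colon \sD \ci \sH_1 \to \sH_2$, $J\gf = \gf$, and to observe that the definition \eqref{eqn:D*} of $\sD^\ad$ coincides exactly with $\dom J^\ad$. Thus the theorem reduces to showing that $J^\ad$ has dense domain in $\sH_2$ if and only if the self-adjoint operator $\gL$ exists.

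For the forward implication, I would assume $\dom J^\ad = \sD^\ad$ is dense in $\sH_2$, so that $(J, J^\ad)$ forms a symmetric pair in the sense of Definition~\ref{def:closable-pair}. Lemma~\ref{thm:closable-pair} then produces the closure $\cj J$ of $J$ and the densely-defined self-adjoint operator $\gL := J^\ad \cj J$ on $\sH_1$. For $\gf \in \sD$, one has $\gf \in \dom J \ci \dom \cj J$ with $\cj J \gf = \gf$ viewed inside $\sH_2$; once one checks the domain membership $\gf \in \dom \gL$ (equivalently, that $\gf$ viewed in $\sH_2$ lies in $\sD^\ad$), the identity is immediate:
\begin{align*}
	\la \gf, \gL \gf \ra_{\sH_1} = \la \cj J \gf, \cj J \gf \ra_{\sH_2} = \|\gf\|_{\sH_2}^2 = \|\gf\|_2^2.
\end{align*}

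For the converse, I would assume $\gL$ exists and polarize the identity $\la \gf, \gL \gf \ra_{\sH_1} = \|\gf\|_2^2$, invoking self-adjointness of $\gL$ on $\dom \gL \supset \sD$, to obtain
\begin{align*}
	\la \gf, \gL \gy \ra_{\sH_1} = \la \gf, \gy \ra_{\sH_2}, \qq \gf, \gy \in \sD.
\end{align*}
Cauchy--Schwarz applied to the left gives $|\la \gf, \gy \ra_{\sH_2}| \leq \|\gL \gy\|_{\sH_1} \|\gf\|_{\sH_1}$, so $\gy \in \sD^\ad$; this proves $\sD \ci \sD^\ad$ as subsets of $\sH_2$. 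Additionally, any $h \in \sH_2$ with $\la \gf, h \ra_{\sH_2} = 0$ for all $\gf \in \sD$ satisfies the defining inequality of $\sD^\ad$ with $C = 0$, so the closed subspace $\sD^{\perp_{\sH_2}}$ also lies in $\sD^\ad$. Hence $\sD^\ad$ contains $\sD + \sD^{\perp_{\sH_2}}$, whose closure in $\sH_2$ is all of $\sH_2$ by orthogonal decomposition against $\cj{\sD}^{\sH_2}$, and so $\sD^\ad$ is dense.

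The step I expect to be most delicate is the domain bookkeeping in the forward direction: verifying $\sD \ci \dom \gL$ reduces to the inclusion $\sD \ci \sD^\ad$ (viewed inside $\sH_2$), but density of $\sD^\ad$ does not obviously force $\sD$ itself to satisfy the adjoint bound; a careful argument invoking the structure of $\cj J$ restricted to $\sD$ should be required here.
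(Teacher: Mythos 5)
Your reduction of the theorem to the inclusion operator $J\colon \sD \ci \sH_1 \to \sH_2$ and the identification $\dom J^\ad = \sD^\ad$ is exactly the paper's starting point, and your forward direction coincides with the paper's: density of $\dom J^\ad$ gives closability of $J$, and von Neumann's theorem makes $\gL := J^\ad \cj{J}$ self-adjoint. The ``domain bookkeeping'' you flag at the end is, however, a genuine gap and not a routine verification: $\gf \in \dom(J^\ad\cj{J})$ if and only if $J\gf \in \dom J^\ad = \sD^\ad$, and density of $\sD^\ad$ does \emph{not} force $J(\sD) \ci \sD^\ad$. (The paper's own computation $\la \gf, J^\ad\cj{J}\gf\ra_{\sH_1} = \la \cj{J}\gf, \cj{J}\gf\ra_{\sH_2}$ presupposes this membership silently.) Moreover the gap cannot be closed in general: take $\sH_1 = \ell^2$ and $\sH_2 = \ell^2(w)$ over $n = 1,2,\dots$ with weight $w_n = n^2$, and let $\sD = \spn\bigl(\{\gd_n\}_{n \geq 1} \cup \{\gf_0\}\bigr)$ where $\gf_0(n) = n^{-8/5}$ lies in both spaces. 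Then $\sD^\ad$ contains every $h$ with $(n^2 h_n) \in \ell^2$ and so is dense in $\sH_2$, yet polarization forces any admissible $\gL$ to satisfy $(\gL\gf_0)(n) = \la \gd_n, \gf_0\ra_{\sH_2} = n^{2/5}$, which is not square-summable; hence no self-adjoint $\gL$ with $\sD \ci \dom\gL$ and \eqref{eqn:gL-identity} can exist. What the construction delivers unconditionally is the form-level statement $\sD \ci \dom \gL^{1/2} = \dom \cj{J}$ with $\|\gL^{1/2}\gf\|_{\sH_1}^2 = \|\cj{J}\gf\|_{\sH_2}^2 = \|\gf\|_2^2$; so either state the conclusion at the level of the quadratic form, or add the hypothesis $J(\sD) \ci \sD^\ad$, which does hold in the paper's application (there $\sD = \spn\{\gd_x\}$ and local finiteness gives $|\la \gf, \gd_x\ra_\energy| = |\Lap\gf(x)| \leq 2c(x)\|\gf\|_{\ell^2}$).

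Your converse is correct and takes a genuinely different, more elementary route than the paper. The paper extracts from \eqref{eqn:gL-identity} a partial isometry $K\colon \sH_1 \to \sH_2$ determined by $K\gL^{1/2}\gf = \gf$ and shows that $\sV = \{\gy \suth K^\ad\gy \in \dom\gL^{1/2}\}$ is a dense subset of $\sD^\ad$. You instead polarize to get $\la \gf, \gL\gy\ra_{\sH_1} = \la \gf, \gy\ra_{\sH_2}$ on $\sD \times \sD$, read off $\sD \ci \sD^\ad$ from Cauchy--Schwarz with $C = \|\gL\gy\|_{\sH_1}$, observe that the orthogonal complement of $\sD$ in $\sH_2$ lies in $\sD^\ad$ for free, and conclude from $\sH_2 = \cj{\sD} \oplus \sD^\perp$. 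This is shorter and avoids square roots and partial isometries; the one thing the paper's version buys is that it survives the weakening of the hypothesis to the form level ($\sD \ci \dom\gL^{1/2}$ only), since your Cauchy--Schwarz step uses $\gL\gy \in \sH_1$ rather than merely $\gL^{1/2}\gy \in \sH_1$.
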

\begin{proof}
	Consider the inclusion operator $J:\sH_1 \to \sH_2$ given by
	\begin{align*}
		\dom J = \sD, \qq J \gf = \gf, \q \gf \in \sD.
	\end{align*}
	By the definition of $\dom J^\ad$, we know that $h \in \dom J^\ad$ iff there is a finite $C=C_h$ such that $|\la J\gf, h\ra_{\sH_2}| \leq C\|\gf\|_{\sH_1}$. By \eqref{eqn:D*}, this means $h \in \dom J^\ad$ iff $h \in \sD^\ad$, i.e., that $\dom J^\ad = \sD^\ad$. Consequently, the assumption that \eqref{eqn:D*} is dense in $\sH_2$ is equivalent to $J^\ad$ being densely defined, and hence also equivalent to $J$ being closable. By a theorem of von Neumann, the operator $\gL := J^\ad \cj{J}$ is self-adjoint in $\sH_1$. Now for $\gf \in \sD$, we have
	\begin{align*}
		\la \gf, \gL \gf\ra_{\sH_1} 
		&= \la \gf, J^\ad \cj{J} \gf\ra_{\sH_1} 
		= \la \cj{J} \gf, \cj{J} \gf\ra_{\sH_2} 
		= \la J\gf, J\gf \ra_{\sH_2} 
		= \la \gf, \gf \ra_{\sH_2} 
		= \|\gf\|_2^2,
	\end{align*}
	which verifies \eqref{eqn:gL-identity}.
	
	For the converse, we need to show that $\sD^\ad$ is dense in $\sH_2$. To this end, we exhibit a set $\sV \ci \sD^\ad \ci \sH_2$, with $\sV$ dense in $\sH_2$. Note that \eqref{eqn:gL-identity} implies the existence of a well-defined partial isometry $K: \sH_1 \to \sH_2$ given by $K\gL^{1/2} \gf = \gf$, $\forall \gf \in \sD$, and satisfying $\dom K = K^\ad K = \cj{\ran(\gL^{1/2})}$. We extend $K$ by defining $K=0$ on $(\dom K)^\perp$, and then defining $\sV := \{\gy \in \sH_2 \suth K^\ad \gy \in \dom (\gL^{1/2})\}$. For $\gy \in \sV$, the definition of $K$ and the Cauchy-Schwarz inequality now yield
	\begin{align*}
		\left| \la \gy, \gf \ra_2 \right|
		= \left| \la \gy, K \gL^{1/2}\gf \ra_2 \right|
		= \left| \la \gL^{1/2} K^\ad \gy,  \gf \ra_1 \right|
		\leq \left\| \gL^{1/2} K^\ad \gy\right\|_1 \left\| \gf \right\|_1,
	\end{align*}
	for every $\gf \in \sD$, whence $\sV \ci \sD^\ad$. Since $\gL^{1/2}$ is densely defined, $\sV$ is dense in $\sH_2$.
\end{proof}

Example~\ref{exm:trivial-adjoint} illustrates the relationship that can exist between the adjoint of an operator between $L^2$ spaces, and the Radon-Nikodym derivative of their respective measures, and how mutual orthogonality of these measures can cause a catastrophic failure of the adjoint. We return to this theme in the following example, which shows how our main result Theorem~\ref{thm:Krein-generalization} can be regarded as a noncommutative version of the Lebesgue-Radon-Nikodym decomposition. We pursue this line of enquiry further in the forthcoming work \cite{Charp}; see also \cite{Jor80b, Ota88, Hassi07}.

\begin{exm}\label{exm:RN}
	Let $(X,\sA)$ be a measure space on which two regular, positive, and \gs-finite measures $\gm_1$ and $\gm_2$ are defined. Let $\sH_i := L^2(X,\gm_i)$ for $i=1,2$, and let $\sD := C_c(X)$. Then the equivalent conditions in the conclusion of Theorem~\ref{thm:Krein-generalization} hold if and only if $\gm_2 \ll \gm_1$. In this case, $\gL$ corresponds to multiplication by the Radon-Nikodym derivative $f:=\frac{d\gm_2}{d\gm_1}$, and \eqref{eqn:gL-identity} can be written
	\begin{align*}
		\la \gf, \gL\gf\ra_1 
		= \int_X \cj\gf \gf f \dgm[1]
		= \int_X |\gf|^2 \frac{d\gm_2}{d\gm_1} \dgm[1]
		= \int_X |\gf|^2 \dgm[2]
		= \|\gf\|_2^2,
		\qq\forall \gf \in C(X).
	\end{align*}
\end{exm}

The connection between is made precise in general by the spectral theorem, in the following corollary of Theorem~\ref{thm:Krein-generalization}.
\begin{cor}\label{thm:RN-connection}
	Assume the hypotheses of Theorem~\ref{thm:Krein-generalization}. Then, for every $\gf \in \sD$, there is a Borel measure $\gm_\gf$ on $[0,\iy)$ such that
	\begin{align}\label{eqn:spec-meas}
		\|\gf\|_1^2 = \gm_\gf([0,\iy))
		\qq \text{and} \qq
		\|\gf\|_2^1 = \int_0^\iy \gl \dgm[\gf](\gl).
	\end{align}
\end{cor}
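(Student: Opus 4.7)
The plan is to derive Corollary~\ref{thm:RN-connection} as a direct application of the spectral theorem to the non-negative self-adjoint operator $\gL$ furnished by Theorem~\ref{thm:Krein-generalization}. The proof of that theorem constructs $\gL = J^\ad \cj{J}$, where $J:\sD \ci \sH_1 \to \sH_2$ is the (closable) inclusion, so for every $\gy \in \dom \gL$ we have
\begin{align*}
\la \gy, \gL \gy \ra_{\sH_1} = \la \gy, J^\ad \cj{J} \gy \ra_{\sH_1} = \|\cj{J} \gy\|_{\sH_2}^2 \geq 0.
\end{align*}
Hence $\gL$ is non-negative and self-adjoint, and its projection-valued spectral measure $E$ is supported on $[0,\iy)$.

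Next, for a fixed $\gf \in \sD$ I would define the positive finite Borel measure $\gm_\gf$ on $[0,\iy)$ by
\begin{align*}
\gm_\gf(B) := \la \gf, E(B) \gf \ra_{\sH_1},
\end{align*}
for each Borel set $B \ci [0,\iy)$. Since $E([0,\iy)) = I$, the total mass equals $\gm_\gf([0,\iy)) = \la \gf, \gf \ra_{\sH_1} = \|\gf\|_1^2$, which is the first identity in \eqref{eqn:spec-meas}. For the second, the inclusion $\sD \ci \dom \gL$ asserted in Theorem~\ref{thm:Krein-generalization} justifies applying the functional-calculus form of the spectral theorem, which yields
\begin{align*}
\int_0^\iy \gl \dgm[\gf](\gl) = \la \gf, \gL \gf \ra_{\sH_1},
\end{align*}
and combining this with \eqref{eqn:gL-identity} identifies the right-hand side as $\|\gf\|_2^2$.

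No serious obstacle arises: once Theorem~\ref{thm:Krein-generalization} supplies the self-adjoint $\gL$ and the identity \eqref{eqn:gL-identity} on $\sD$, the spectral theorem is precisely the machine that converts the operator statement $\gL = \int_0^\iy \gl\,dE(\gl)$ into the two scalar identities in \eqref{eqn:spec-meas}. The only point requiring verification is the non-negativity of $\gL$, so that the support of $E$ genuinely lies in $[0,\iy)$; this is immediate from the factorization $\gL = J^\ad \cj{J}$ obtained in the proof of Theorem~\ref{thm:Krein-generalization}.
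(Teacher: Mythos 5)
Your proposal is correct and follows essentially the same route as the paper: both take the self-adjoint operator $\gL = J^\ad \cj{J}$ from Theorem~\ref{thm:Krein-generalization}, define $\gm_\gf$ as the diagonal spectral measure (your $\la \gf, E(B)\gf\ra_1$ coincides with the paper's $\|E_\gL(\cdot)\gf\|_1^2$ since $E(B)$ is an orthogonal projection), and read off the two identities from the spectral theorem together with \eqref{eqn:gL-identity}. Your version merely spells out the details the paper leaves implicit, including the non-negativity of $\gL$ and the role of $\sD \ci \dom\gL$.
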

\begin{proof}
	Following the proof of Theorem~\ref{thm:Krein-generalization}, we take $J:\sD \to \sH_2$ by $J\gf=\gf$ and obtain the self-adjoint operator $\gL = J^\ad J$. The spectral theorem yields a spectral resolution
	\begin{align*}
		\gL = \int_0^\iy \gl E_\gL(d\gl),
	\end{align*}
	where $E_\gL$ is the associated projection-valued measure. If we define $\gm_\gf$ via
	\begin{align*}
		\dgm[\gf] := \|E_\gL(\dgl) \gf\|_1^2,
	\end{align*}
	then the conclusions in \eqref{eqn:spec-meas} follow from the spectral theorem.
\end{proof}

For an additional application of Theorem~\ref{thm:Krein-generalization}, see the example of the Laplace operator on the energy space given in Example~\ref{exm:l2-HE}.

\section{Application: Laplace operators on infinite networks}
\label{sec:Laplace-operators-on-infinite-networks}

We now proceed to introduce the key notions used throughout this paper: resistance networks, the energy form \energy, the Laplace operator \Lap, and their elementary properties. For further background, we refer to \cite{DGG, ERM, bdG, RBIN, SRAMO, Multipliers, Comparisons, UnboundedCont, OTERN, Lyons, KellerLenz09, KellerLenz10}.

\begin{defn}\label{def:network}
  A \emph{(resistance) network} $(\Graph,\cond)$ is a connected weighted undirected graph with vertex set \Graph and adjacency relation defined by a symmetric \emph{conductance function} $\cond: \Graph \times \Graph \to [0,\iy)$. 
  More precisely, there is an edge connecting $x$ and $y$ iff $c_{xy}>0$, in which case we write $x \nbr y$. The nonnegative number $c_{xy}=c_{yx}$ is the weight associated to this edge and it is interpreted as the conductance, or reciprocal resistance of the edge.
  
  We make the standing assumption that $(\Graph,\cond)$ is \emph{locally finite}. This means that every vertex has \emph{finite degree}, i.e., for any fixed $x \in \Graph$ there are only finitely many $y \in \Graph$ for which $c_{xy}>0$. We denote the net conductance at a vertex by 
  \linenopax
  \begin{align}\label{eqn:c(x)}
      \cond(x) := \sum_{y \nbr x} \cond_{xy}.     
  \end{align}
Motivated by current flow in electrical networks, we also assume $c_{xx}=0$ for every vertex $x \in G$.  

In this paper, \emph{connected} means simply that for any $x,y \in \Graph $, there is a finite sequence $\{x_i\}_{i=0}^n$ with $x=x_0$, $y=x_n$, and $\cond_{x_{i-1} x_i} > 0$, $i=1,\dots,n$.  
For any network, one can fix a reference vertex, which we shall denote by $o$ (for ``origin''). It will always be apparent that our calculations depend in no way on the choice of $o$.
\end{defn}

\begin{defn}\label{def:graph-laplacian}
  The \emph{Laplacian} on \Graph is the linear difference operator which acts on a function $u:\Graph \to \bR$ by
  \linenopax
  \begin{equation}\label{eqn:Lap}
    (\Lap u)(x) :
    = \sum_{y \nbr x} \cond_{xy}(u(x)-u(y)).
  \end{equation}
  A function $u:\Graph \to \bR$ is \emph{harmonic} iff $\Lap u(x)=0$ for each $x \in \Graph$.
  Note that the sum in \eqref{eqn:Lap} is finite by the local finiteness assumption above, and so the Laplacian is well-defined.
\end{defn}

The domain of \Lap, considered as an operator on \HE or $\ell^2(G)$, is given in Definition~\ref{def:domLapE} and Definition~\ref{def:domLap2}.

\begin{defn}\label{def:graph-energy}
  The \emph{energy form} is the (closed, bilinear) Dirichlet form
  \linenopax
  \begin{align}\label{eqn:def:energy-form}
    \energy(u,v)
    := \frac12 \sum_{x,y \in \Graph} \cond_{xy}(u(x)-u(y))(v(x)-v(y)),
  \end{align}
  which is defined whenever the functions $u$ and $v$ lie in the domain
  \linenopax
  \begin{equation}\label{eqn:def:energy-domain}
    \dom \energy = \{u:\Graph \to \bR \suth \energy(u,u)<\iy\}.
  \end{equation}
  Hereafter, we write the energy of $u$ as $\energy(u) := \energy(u,u)$. Note that $\energy(u)$ is a sum of nonnegative terms and hence converges iff it converges absolutely. 
\end{defn}

The energy form \energy is sesquilinear and conjugate symmetric on $\dom \energy$ and would be an inner product if it were positive definite. Let \one denote the constant function with value 1 and observe that $\ker \energy = \bR \one$. One can show that $\dom \energy / \bR \one$ is complete and that \energy is closed;  see \cite{DGG,OTERN}, \cite{Kat95}, or \cite{FOT94}.

\begin{defn}\label{def:H_energy}\label{def:The-energy-Hilbert-space}
  The \emph{energy (Hilbert) space} is $\HE := \dom \energy / \bR \one$. The inner product and corresponding norm are denoted by
  \linenopax
  \begin{equation}\label{eqn:energy-inner-product}
    \la u, v \ra_\energy := \energy(u,v)
    \q\text{and}\q
    \|u\|_\energy := \energy(u,u)^{1/2}.
  \end{equation}
\end{defn}

It is shown in \cite[Lem.~2.5]{DGG} that the evaluation functionals $L_x u = u(x) - u(o)$ are continuous, and hence correspond to elements of \HE by Riesz duality (see also \cite[Cor.~2.6]{DGG}). When considering \bC-valued functions, \eqref{eqn:energy-inner-product} is modified as follows: $\la u, v \ra_\energy := \energy(\cj{u},v)$.

\begin{defn}\label{def:vx}\label{def:energy-kernel}
  Let $v_x$ be defined to be the unique element of \HE for which
  \linenopax
  \begin{equation}\label{eqn:v_x}
    \la v_x, u\ra_\energy = u(x)-u(o),
    \qq \text{for every } u \in \HE.
  \end{equation}
  Note that $v_o$ corresponds to a constant function, since $\la v_o, u\ra_\energy = 0$ for every $u \in \HE$. Therefore, $v_o$ may be safely omitted in some calculations. 
\end{defn}

  As \eqref{eqn:v_x} means that the collection $\{v_x\}_{x \in \Graph}$ forms a reproducing kernel for \HE, we call $\{v_x\}_{x \in \Graph}$ the \emph{energy kernel}. It follows that the energy kernel has dense span in \HE; cf. \cite{Aronszajn50}.\footnote{To see this, note that a RKHS is a Hilbert space $H$ of functions on some set $X$, such that point evaluation by points in $X$ is continuous in the norm of $H$. Consequently, every $x \in X$ defines a vector $k_x \in H$ by Riesz's Theorem, and it is immediate from this that $\spn\{k_x\}_{x \in X}$ is dense in $H$.}

\begin{remark}[Differences and representatives]\label{rem:differences}
  Equation \eqref{eqn:v_x} is independent of the choice of representative of $u$ because the right-hand side is a difference: if $u$ and $u'$ are both representatives of the same element of \HE, then $u' = u+k$ for some $k \in \bR$ and
  $u'(x) - u'(o) = (u(x)+k)-(u(o)+k) = u(x)-u(o).$
  By the same token, the formula for \Lap given in \eqref{eqn:Lap} describes unambiguously the action of \Lap on equivalence classes $u \in \HE$. Indeed, formula \eqref{eqn:Lap} defines a function $\Lap u:\Graph \to \bR$ but we may also interpret $\Lap u$ as the class containing this representative.
\end{remark}

\begin{defn}\label{def:d_x}
  Let $\gd_x \in \ell^2(G)$ denote the Dirac mass at $x$, i.e., the characteristic function of the singleton $\{x\}$ and let $\gd_x \in \HE$ denote the element of \HE which has $\gd_x \in \ell^2(G)$ as a representative. The context will make it clear which meaning is intended. 
\end{defn}

\begin{remark}\label{rem:Diracs-in-HE}
	 Observe that $\energy(\gd_x) = \cond(x) < \iy$ is immediate from \eqref{eqn:def:energy-form}, and hence one always has $\gd_x \in \HE$ (recall that $c(x)$ is the total conductance at $x$; see \eqref{eqn:c(x)}).
\end{remark}

\begin{defn}\label{def:Fin}
  For $v \in \HE$, one says that $v$ has \emph{finite support} iff there is a finite set $F \ci G$ such that $v(x) = k \in \bC$ for all $x \notin F$. Equivalently, the set of functions of finite support is 
  \linenopax
  \begin{equation}\label{eqn:span(dx)}
    \spn\{\gd_x\} = \{u \in \dom \energy \suth u \text{ is constant outside some finite set}\}.
  \end{equation}
  Define \Fin to be the \energy-closure of $\spn\{\gd_x\}$. 
\end{defn}

\begin{defn}\label{def:Harm}
  The set of harmonic functions of finite energy is denoted
  \linenopax
  \begin{equation}\label{eqn:Harm}
    \Harm := \{v \in \HE \suth \Lap v(x) = 0, \text{ for all } x \in G\}.
  \end{equation}
\end{defn}

The following result is well known; see \cite[\S{VI}]{Soardi94}, \cite[\S9.3]{Lyons}, \cite[Thm.~2.15]{DGG}, or the original \cite[Thm.~4.1]{Yamasaki79}.

\begin{theorem}[Royden Decomposition]\label{thm:HE=Fin+Harm}
  $\HE = \Fin \oplus \Harm$.
\end{theorem}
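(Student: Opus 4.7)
The plan is to prove Royden's decomposition by identifying $\Harm$ as the orthogonal complement of $\Fin$ in $\HE$ with respect to the energy inner product, and then invoking the standard Hilbert-space orthogonal decomposition. Since $\Fin$ is defined as the $\energy$-closure of $\spn\{\gd_x\}$, the identity $\HE = \Fin \oplus \Fin^\perp$ reduces the problem to verifying that $\Fin^\perp = \Harm$.

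First I would establish the key ``reproducing'' identity
\begin{align*}
  \la \gd_x, u\ra_\energy = (\Lap u)(x), \qq \text{for all } x \in G, \; u \in \HE.
\end{align*}
This is a direct computation from Definition~\ref{def:graph-energy}: expanding
$\la \gd_x, u\ra_\energy = \frac12 \sum_{y,z} c_{yz}(\gd_x(y)-\gd_x(z))(u(y)-u(z))$,
the only surviving terms are those with exactly one of $y,z$ equal to $x$ (the diagonal term vanishes because $c_{xx}=0$). Using the symmetry $c_{xy} = c_{yx}$, the two sums coincide and collapse to $\sum_{z \nbr x} c_{xz}(u(x)-u(z)) = (\Lap u)(x)$. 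Local finiteness ensures this sum has only finitely many nonzero terms, so the rearrangement is legitimate; note also that the identity is independent of the choice of representative of $u$, since it involves only differences $u(x)-u(z)$ (cf.\ Remark~\ref{rem:differences}).

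Given this identity, the equivalence $\Fin^\perp = \Harm$ is immediate: $u \in \Fin^\perp$ iff $\la \gd_x, u\ra_\energy = 0$ for every $x \in G$ (since the span of the $\gd_x$ is $\energy$-dense in $\Fin$ by definition, and orthogonality passes to the closure), which by the reproducing identity is equivalent to $(\Lap u)(x) = 0$ for every $x$, i.e., $u \in \Harm$. Finally, applying the orthogonal decomposition theorem in the Hilbert space $\HE$ yields
\begin{align*}
  \HE = \Fin \oplus \Fin^\perp = \Fin \oplus \Harm.
\end{align*}

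The only place that requires genuine care is the reproducing identity itself, and specifically the justification that the symmetric double sum defining $\la \gd_x, u\ra_\energy$ may be reorganized as above. The main obstacle is purely bookkeeping: one must confirm that $\gd_x \in \HE$ (which is Remark~\ref{rem:Diracs-in-HE}, giving $\energy(\gd_x) = c(x) < \iy$), and then use local finiteness together with Cauchy--Schwarz in the form $\energy(u,v)^2 \leq \energy(u)\energy(v)$ to see that the pairing is absolutely convergent, so no conditional-convergence issues arise in passing to the two-term sum.
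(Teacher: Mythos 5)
Your proof is correct. The paper itself does not prove this theorem---it cites \cite[\S{VI}]{Soardi94}, \cite[\S9.3]{Lyons}, \cite[Thm.~2.15]{DGG}, and \cite[Thm.~4.1]{Yamasaki79}---but your argument is the standard one found in those sources: the reproducing identity $\la \gd_x, u\ra_\energy = \Lap u(x)$ (which the paper records separately as Lemma~\ref{thm:<delta_x,v>=Lapv(x)}) identifies $\Fin^\perp$ with $\Harm$, and the Hilbert-space projection theorem does the rest. Your handling of the two genuine technical points---well-definedness modulo constants and the rearrangement of the double sum via $c_{xx}=0$, symmetry, and local finiteness---is exactly what is needed, so the argument is complete.
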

 

\begin{defn}\label{def:dipole}
  A \emph{monopole} is any $w \in \HE$ satisfying the pointwise identity $\Lap w = \gd_x$ (in either sense of Remark~\ref{rem:differences}) for some vertex $x \in \Graph$. 
  A \emph{dipole} is any $v \in \HE$ satisfying the pointwise identity $\Lap v = \gd_x - \gd_y$ for some $x,y \in \Graph$.%
\end{defn}

\begin{remark}\label{rem:monotransience}
  It is easy to see from the definitions (or \cite[Lemma~2.13]{DGG}) that energy kernel elements are dipoles, i.e., that $\Lap v_x = \gd_x - \gd_o$, and that one can therefore always find a dipole for any given pair of vertices $x,y \in G$, namely, $v_x-v_y$. On the other hand, monopoles exist if and only if the network is transient (see \cite[Thm.~2.12]{Woess00} or \cite[Rem.~3.5]{DGG}). 
\end{remark}  

\begin{remark}\label{rem:normalization}
  Denote the unique energy-minimizing monopole at $o$ by $w_o$; the existence of such an object is explained in \cite[\S3.1]{DGG}.
  We are interested in the family of monopoles defined by
  \linenopax
  \begin{align}\label{eqn:w_x}
    \monov := w_o + v_x, \qq x \neq o.
  \end{align}
  We use the representatives specified by
  \linenopax
  \begin{align}\label{eqn:w_x(o)}
    \monov(y) = \la \monov,\monoy\ra_\energy = \monoy(x), 
    \qq\text{and}\qq
    v_x(o)=0.
  \end{align}
  When $\Harm=0$, $\energy(\monov) = \la \monov,\monov\ra_\energy = \monov(x)$ is the \emph{capacity} of $x$; see, e.g., \cite[\S4.D]{Woess09}.
\end{remark}

\begin{lemma}[{\cite[Lem.~2.11]{DGG}}]
  \label{thm:<delta_x,v>=Lapv(x)}
  For $x \in \Graph$ and $u \in \HE$,  $\la \gd_x, u \ra_\energy = \Lap u(x)$.
  \begin{proof}
    Compute $\la \gd_x, u \ra_\energy = \energy(\gd_x, u)$ directly from formula \eqref{eqn:def:energy-form}.
  \end{proof}
\end{lemma}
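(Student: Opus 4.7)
The plan is to compute $\langle \gd_x, u \rangle_{\energy} = \energy(\gd_x, u)$ directly from the definition in \eqref{eqn:def:energy-form} and show that only edges incident on $x$ contribute, yielding precisely the sum in \eqref{eqn:Lap}. Note first that there is no convergence issue: by Remark~\ref{rem:Diracs-in-HE}, $\gd_x \in \HE$ with $\energy(\gd_x) = c(x)$, so the Cauchy--Schwarz inequality in \HE gives $|\energy(\gd_x, u)| \leq c(x)^{1/2} \|u\|_\energy < \iy$, ensuring that the bilinear sum converges absolutely and may be rearranged freely. Moreover, since the integrand $(\gd_x(a)-\gd_x(b))(u(a)-u(b))$ depends only on differences of $u$, the computation is independent of the choice of representative of $u \in \HE$, in the sense of Remark~\ref{rem:differences}.

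Next I would expand
\begin{align*}
	\energy(\gd_x, u) = \tfrac12 \sum_{a,b \in G} c_{ab} \bigl(\gd_x(a)-\gd_x(b)\bigr)\bigl(u(a)-u(b)\bigr)
\end{align*}
and observe that the factor $\gd_x(a) - \gd_x(b)$ vanishes unless exactly one of $a,b$ equals $x$. Splitting the sum into the two cases $a=x, b \neq x$ and $b=x, a \neq x$, and using the symmetry $c_{ab}=c_{ba}$ to merge the two pieces, the factor $\tfrac12$ is absorbed and one obtains
\begin{align*}
	\energy(\gd_x, u) = \sum_{y \neq x} c_{xy}\bigl(u(x)-u(y)\bigr) = \sum_{y \nbr x} c_{xy}\bigl(u(x)-u(y)\bigr) = \Lap u(x),
\end{align*}
where the second equality uses $c_{xy}=0$ when $y \not\nbr x$, and the last equality is \eqref{eqn:Lap}.

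There is no real obstacle here; the only subtlety worth flagging is the interchange of summation order when splitting the $(a,b)$-sum, which is justified by the absolute convergence noted above. This lemma will subsequently be used repeatedly to pass between the energy inner product and the pointwise action of \Lap, and in particular it is the key identity underlying the comparison of \LapE with $\Lap_2$ in later sections.
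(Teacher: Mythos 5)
Your proof is correct and follows exactly the route the paper intends: the paper's proof is the single instruction ``compute $\la \gd_x, u \ra_\energy = \energy(\gd_x, u)$ directly from \eqref{eqn:def:energy-form},'' and your argument is precisely that computation carried out in detail, with the convergence and representative-independence points as sensible added care. No discrepancies.
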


\begin{lemma}\label{thm:Lap-mono-Kron}
  For any $x,y \in G$, 
  \linenopax
  \begin{align}\label{eqn:Lap-mono-Kron}
    \Lap \monov(y) = \Lap \monoy(x) 
    = \la \monov,\Lap \monoy\ra_\energy 
    = \la \Lap \monov, \monoy\ra_\energy = \gd_{xy},  
  \end{align}
  where $\gd_{xy}$ is the Kronecker delta.
  \begin{proof}
    First, note that $\Lap \monov(y) = \gd_{xy} = \Lap \monov(y)$ as functions, immediately from the definition of monopole. Then the substitution $\Lap \monoy = \gd_y$ gives 
    \linenopax
  	\begin{align}\label{eqn:<w,d>=d}
      \la \monov,\Lap \monoy\ra_\energy
      = \la \monov,\gd_y\ra_\energy
      = \Lap \monov(y)
    \end{align}
    by Lemma~\ref{thm:<delta_x,v>=Lapv(x)}, and similarly for the other identity. 
  \end{proof}
\end{lemma}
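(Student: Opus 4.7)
The plan is to chain together two very simple facts: the defining pointwise identity of a monopole, $\Lap\monov = \gd_x$, and the reproducing-type formula from Lemma~\ref{thm:<delta_x,v>=Lapv(x)}, which says $\la \gd_x, u\ra_\energy = \Lap u(x)$ for any $u \in \HE$. All four quantities in \eqref{eqn:Lap-mono-Kron} should collapse to $\gd_{xy}$ by direct substitution, so the proof is a matter of bookkeeping rather than of any genuine new estimate.

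First I would verify the two pointwise equalities. By Definition~\ref{def:dipole}, $\monov$ satisfies $\Lap \monov = \gd_x$ pointwise; evaluating at $y$ gives $\Lap\monov(y) = \gd_x(y) = \gd_{xy}$, and symmetrically $\Lap\monoy(x) = \gd_y(x) = \gd_{xy}$. This already establishes the first equality $\Lap\monov(y) = \Lap\monoy(x)$.

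Next I would handle the two inner products. Substituting $\Lap\monoy = \gd_y$ inside the energy inner product and then applying Lemma~\ref{thm:<delta_x,v>=Lapv(x)} with $u = \monov$ yields
\begin{align*}
\la \monov, \Lap\monoy\ra_\energy = \la \monov, \gd_y\ra_\energy = \Lap\monov(y) = \gd_{xy}.
\end{align*}
The mirror computation, substituting $\Lap\monov = \gd_x$ and applying the same lemma with $u = \monoy$, gives $\la \Lap\monov, \monoy\ra_\energy = \gd_{xy}$. Combining these four equalities finishes the proof.

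There is no real obstacle here; the one minor point to keep in mind is Remark~\ref{rem:differences}, which justifies interpreting $\Lap\monov = \gd_x$ either as an equality of functions on $G$ or as an equality of equivalence classes in \HE. Both readings are needed: the pointwise reading for $\Lap\monov(y) = \gd_{xy}$, and the \HE-class reading so that the symbol $\la \monov, \Lap\monoy\ra_\energy$ makes sense and Lemma~\ref{thm:<delta_x,v>=Lapv(x)} applies. Once this is noted, the argument is a one-line substitution in each of the two nontrivial identities.
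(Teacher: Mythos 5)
Your proof is correct and follows essentially the same route as the paper's: the pointwise monopole identity $\Lap\monov=\gd_x$ gives the first equality, and substituting $\gd_y$ (resp.\ $\gd_x$) into the inner products followed by Lemma~\ref{thm:<delta_x,v>=Lapv(x)} gives the rest. Your write-up is in fact slightly cleaner than the paper's (which contains an evident typo, writing $\Lap\monov(y)$ twice where it means $\Lap\monov(y)=\gd_{xy}=\Lap\monoy(x)$), and your remark about the two readings of $\Lap\monov=\gd_x$ via Remark~\ref{rem:differences} is a sensible, if optional, clarification.
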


\subsection{\Lap as an unbounded operator} 
\label{sec:ell2(G)}
In this subsection, we consider closability and apply the results of the earlier sections. 
As there are many uses of the notation $\ell^2(G)$, we provide the following elementary definitions to clarify our conventions.

\begin{defn}\label{def:ell2}
For functions $u,v:G \to \bR$, define the inner product
  \linenopax
  \begin{align}\label{eqn:ell2-inner-product}
    \la u, v\ra_2 := \sum_{x \in G} u(x) v(x).
  \end{align}
\end{defn}
  
\begin{defn}\label{def:domLap2}
  The closed operator $\Lap_2$ on $\ell^2(G)$ is obtained by taking the graph closure (see Remark~\ref{rem:domLap2}) of the operator \Lap which is defined pointwise by \eqref{eqn:Lap} on $\spn\{\gd_x\}_{x \in G}$, the subspace of (finite) linear combinations of point masses. 
\end{defn}

\begin{defn}\label{def:domLapE}
  The closed operator \LapE on \HE is obtained by taking the graph closure of the operator \Lap defined on $\spn\{\monov\}_{x \in G}$ pointwise by \eqref{eqn:Lap}. 
  %
\end{defn}

\begin{remark}\label{rem:domLap2}
  It is shown in \cite[Lem.~2.7 and Thm.~2.8]{SRAMO} states that $\Lap$ is semibounded and essentially self-adjoint as an operator on $\spn\{\gd_x\}_{x \in G}$. It follows that $\Lap$ is closable by the same arguments as in the end of the proof of Lemma~\ref{thm:semibounded}, whence $\Lap_2$ is closed, self-adjoint, and in particular, well-defined. However, closability will also follow in this context from the properties of symmetric pairs shown in Lemma~\ref{thm:closable-pair}.
  Note that in sharp contrast, the analogous operator \LapE is not automatically self-adjoint (see \cite{SRAMO}) and hence some care is needed (for example, in the proof of Lemma~\ref{thm:semibounded}).
  See also \cite{Woj07, KellerLenz09, KellerLenz10}.
\end{remark}

The following lemma shows that Definition~\ref{def:domLapE} makes sense.  

 
\begin{lemma}\label{thm:semibounded}
   \LapE is a well-defined, non-negative, closed and Hermitian operator on \HE. 
  \begin{proof}
    Let $\gx = \sum_{x \in F} \gx_x \monov$, for some finite set $F \ci G$. By \eqref{eqn:Lap-mono-Kron}, 
    \linenopax
    \begin{align}\label{eqn:semibounded}
      \la u, \Lap u \ra_\energy
      &= \sum_{x,y \in F} {\gx_x} \gx_y \la \monov,\Lap \monoy\ra_\energy 
       = \sum_{x,y \in F} {\gx_x} \gx_y \gd_{xy}
       = \sum_{x \in F} |\gx_x|^2 \geq 0.  
    \end{align} 
    Since the conductance function $c$ is \bR-valued, the Laplacian commutes with conjugation and therefore is also symmetric as an operator in the corresponding \bC-valued Hilbert space. This implies \Lap is Hermitian and hence contained in its adjoint. Since every adjoint operator is closed, \Lap is closable. Furthermore, the closure of any semibounded operator is semibounded.    
    To see that the image of \Lap lies in \HE, note from Lemma~\ref{thm:Lap-mono-Kron} that $\Lap\monov = \gd_x \in \HE$ by Remark~\ref{rem:Diracs-in-HE}.
  \end{proof}
\end{lemma}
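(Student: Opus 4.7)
My plan is to reduce everything to the reproducing-kernel-like identity $\la w_x, \Lap w_y\ra_\energy = \gd_{xy}$ supplied by Lemma~\ref{thm:Lap-mono-Kron}, and then package symmetry plus non-negativity into closability by appealing to a theorem of von Neumann.

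First, I would check that \LapE is well-defined in the sense that it genuinely maps its initial domain $\sD_0 := \spn\{\monov\}_{x \in G}$ into \HE. Since $\Lap \monov = \gd_x$ pointwise and $\energy(\gd_x) = c(x) < \iy$ by Remark~\ref{rem:Diracs-in-HE}, each $\Lap \monov$ already lives in \HE; extending by linearity gives $\Lap(\sD_0) \ci \HE$. This is the only place where local finiteness really matters, and it is handled entirely by previously established lemmas.

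Next, for a typical element $\gx = \sum_{x \in F} \gx_x \monov$ with $F \ci G$ finite, I would compute
\begin{align*}
	\la \gx, \Lap \gx\ra_\energy
	= \sum_{x,y \in F} \cj{\gx_x} \gx_y \la \monov, \Lap \monoy \ra_\energy
	= \sum_{x,y \in F} \cj{\gx_x} \gx_y \gd_{xy}
	= \sum_{x \in F} |\gx_x|^2 \geq 0,
\end{align*}
which simultaneously yields non-negativity and, via the analogous identity with the roles of $x$ and $y$ swapped, Hermitian symmetry $\la \gx, \Lap \gh\ra_\energy = \la \Lap \gx, \gh\ra_\energy$ on $\sD_0$. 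Thus $\Lap$ restricted to $\sD_0$ is symmetric and non-negative.

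Finally, since $\sD_0 \supset \spn\{v_x\}$ (because $v_x = \monov - w_o$) and the energy kernel has dense span in \HE, $\Lap|_{\sD_0}$ is densely defined. A symmetric densely-defined operator satisfies $\Lap \ci \Lap^\ad$, so $\Lap^\ad$ is itself densely defined, and hence $\Lap|_{\sD_0}$ is closable by von Neumann's theorem (this is exactly the content of Lemma~\ref{thm:closable-pair} applied with $A = B = \Lap|_{\sD_0}$). The closure \LapE is then automatically closed, remains Hermitian, and preserves the non-negativity estimate on its domain by continuity of the inner product. The only subtle point, which I expect to be the main (if minor) obstacle, is verifying that the closure of a non-negative symmetric operator remains non-negative on the extended domain; this follows from the fact that $\la \gx_n, \Lap \gx_n\ra_\energy \to \la \gx, \gy \ra_\energy$ whenever $(\gx_n, \Lap\gx_n) \to (\gx, \gy)$ in the graph norm, so any limit inherits the non-negativity bound.
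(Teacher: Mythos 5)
Your proposal is correct and follows essentially the same route as the paper: the Kronecker identity $\la \monov, \Lap\monoy\ra_\energy = \gd_{xy}$ of Lemma~\ref{thm:Lap-mono-Kron} gives non-negativity and symmetry on $\spn\{\monov\}$, the containment $\Lap \ci \Lap^\ad$ gives closability, and $\Lap\monov = \gd_x \in \HE$ gives well-definedness. Your added remarks on the density of the initial domain and on the persistence of the semibound under closure are slightly more explicit than the paper's, but they do not change the argument.
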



In the following theorem, we apply Lemma~\ref{thm:closable-pair} to the construction laid out in \cite{Krein}. This shows how one can recover the closability results described in Remark~\ref{rem:domLap2} and Lemma~\ref{thm:semibounded} in a manner which is both quicker and more elegant.

\begin{theorem}\label{thm:KL-closable}
	Define $K: \spn\{\gd_x\}_{x \in G} \to \HE$ by $K\gd_x = \gd_x$ and define $L: \spn\{v_x\}_{x \in G} \to \ell^2(G)$ by $L(v_x) = \gd_x - \gd_o$. Then
	\begin{align}\label{eqn:KL-closable}
		\la K \gf, \gy \ra_\energy = \la \gf, L\gy\ra_2,
		\qq \text{for all } \gf \in \spn\{\gd_x\}_{x \in G}, \gy \in \spn\{v_x\}_{x \in G}.
	\end{align}
	It therefore follows immediately from Lemma~\ref{thm:closable-pair} that both operators are closable.
\end{theorem}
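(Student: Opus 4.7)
The plan is to verify the identity \eqref{eqn:KL-closable} on the spanning sets $\{\gd_x\}_{x \in G}$ and $\{v_y\}_{y \in G}$, and then invoke Lemma~\ref{thm:closable-pair} directly. The identity is bilinear in $(\gf, \gy)$, so it suffices to establish
\[
\la K\gd_x, v_y\ra_\energy = \la \gd_x, L v_y\ra_2, \qq \forall x,y \in G,
\]
and extend by finite linearity.

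First I would note that $K$ and $L$ are densely defined: $\spn\{\gd_x\}_{x\in G}$ is obviously dense in $\ell^2(G)$, and $\spn\{v_x\}_{x \in G}$ is dense in \HE because the energy kernel $\{v_x\}$ is a reproducing kernel (see the discussion following Definition~\ref{def:vx}). For the right-hand side, the computation is immediate from Definition~\ref{def:ell2}:
\[
\la \gd_x, L v_y\ra_2 = \la \gd_x, \gd_y - \gd_o\ra_2 = \gd_{xy} - \gd_{xo}.
\]
For the left-hand side, since $K\gd_x = \gd_x$ (now viewed in \HE), Lemma~\ref{thm:<delta_x,v>=Lapv(x)} gives $\la \gd_x, v_y\ra_\energy = \Lap v_y(x)$. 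By Remark~\ref{rem:monotransience}, $v_y$ is a dipole with $\Lap v_y = \gd_y - \gd_o$, so
\[
\la K\gd_x, v_y\ra_\energy = \Lap v_y(x) = \gd_y(x) - \gd_o(x) = \gd_{xy} - \gd_{xo}.
\]
The two sides agree, which establishes \eqref{eqn:KL-closable} on basis vectors; extending by finite $\bR$-linearity in both arguments gives the general identity.

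With \eqref{eqn:KL-closable} in hand, $(K, L)$ satisfies Definition~\ref{def:closable-pair} verbatim (with $\sH_1 = \ell^2(G)$ and $\sH_2 = \HE$), so Lemma~\ref{thm:closable-pair} applies and both $K$ and $L$ are closable. There is really no obstacle in this argument: the only delicate point is matching the two incarnations of $\gd_x$ correctly (as an $\ell^2$ vector on the domain side and as an element of \HE on the range side, per Definition~\ref{def:d_x}), and recalling that the two key inner-product identities—the reproducing property for $\gd_x$ in \HE and the dipole property $\Lap v_y = \gd_y - \gd_o$—conspire so that the Kronecker structure on both sides coincides.
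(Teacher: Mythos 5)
Your proof is correct and follows the same overall strategy as the paper (verify \eqref{eqn:KL-closable} on the generating vectors $\gd_x$, $v_y$, extend by bilinearity, and invoke Lemma~\ref{thm:closable-pair}), but the key computation is routed through a different identity. The paper evaluates $\la \gd_x, v_y\ra_\energy$ directly via the reproducing property \eqref{eqn:v_x}, reading it as $\gd_x(y)-\gd_x(o)$, which is then literally $\la \gd_x, \gd_y-\gd_o\ra_2$. You instead use Lemma~\ref{thm:<delta_x,v>=Lapv(x)} together with the dipole identity $\Lap v_y = \gd_y-\gd_o$ from Remark~\ref{rem:monotransience} to get $\la \gd_x, v_y\ra_\energy = \Lap v_y(x) = \gd_{xy}-\gd_{xo}$. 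The two computations are equivalent (the dipole property of $v_y$ is itself a consequence of the reproducing property), and both land on the same Kronecker expression; the paper's version is marginally more economical because it needs only one cited fact, while yours makes explicit the role of the Laplacian, which is thematically apt here. Your added remarks on dense domains and on distinguishing the two incarnations of $\gd_x$ are correct and harmless.
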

\begin{proof}
	It suffices to establish \eqref{eqn:KL-closable} for every $\gf=\gd_x$ and $\gy = v_y$, and
	\begin{align*}
		\la K \gd_x, v_y \ra_\energy 
		= \la \gd_x, v_y \ra_\energy 
		= \gd_x(y) - \gd_x(o)
		= \la \gd_x, \gd_y - \gd_o\ra_2
		= \la \gd_x, Lv_y\ra_2
	\end{align*}
	follows from the reproducing property \eqref{eqn:v_x}.
\end{proof}

This provides a more effective way of proving a key result of \cite{Krein}, in the following corollary. \begin{cor}\label{thm:sa-extensions}
	In the notation of Theorem~\ref{thm:KL-closable}, $K^\ad\cj{K}$ is a self-adjoint extension of $\Lap_2$ and $L^\ad\cj{L}$ is a self-adjoint extension of $\LapE$. 
\end{cor}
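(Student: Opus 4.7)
The plan is to verify both containments by computing $K^\ad\cj K$ and $L^\ad\cj L$ on the respective initial (core) domains $\spn\{\gd_x\}$ and $\spn\{\monov\}$, and then to invoke the closedness (in fact self-adjointness, by Lemma~\ref{thm:closable-pair}) of these operators in order to pass to the closures $\Lap_2$ and $\LapE$.

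For $K^\ad\cj K \supseteq \Lap_2$: fix $x \in G$; by local finiteness, $\Lap\gd_x$ has finite support and hence lies in $\ell^2(G)$. Using Lemma~\ref{thm:<delta_x,v>=Lapv(x)} and $K\gd_y = \gd_y$, I would compute
\[
	\la K\gd_y,\gd_x\ra_\energy = \la \gd_y,\gd_x\ra_\energy = \Lap\gd_x(y) = \la \gd_y,\Lap\gd_x\ra_2
\]
for every $y \in G$, whence $\gd_x \in \dom K^\ad$ with $K^\ad\gd_x = \Lap\gd_x$. This yields $K^\ad K\gd_x = \Lap\gd_x$, so $K^\ad K$ coincides with $\Lap$ on $\spn\{\gd_x\}$; closedness of $K^\ad\cj K$ then gives $K^\ad\cj K \supseteq \cj{\Lap|_{\spn\{\gd_x\}}} = \Lap_2$.

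For $L^\ad\cj L \supseteq \LapE$: here the key shortcut is the pair relation $K \ci L^\ad$ from Theorem~\ref{thm:KL-closable}. Since $Lv_x = \gd_x-\gd_o \in \dom K$ and $K(\gd_x-\gd_o) = \gd_x-\gd_o \in \HE$, I immediately obtain $L^\ad(\gd_x-\gd_o) = \gd_x-\gd_o$, and hence $L^\ad L v_x = \gd_x-\gd_o = \Lap v_x$ (the last equality by Remark~\ref{rem:monotransience}). To upgrade from $\spn\{v_x\}$ to the defining domain $\spn\{\monov\}$ of $\LapE$, I would next show $w_o \in \dom L^\ad\cj L$ with $L^\ad\cj L w_o = \gd_o$. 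Writing $\monov = v_x + w_o$, linearity would then give $L^\ad\cj L \monov = (\gd_x-\gd_o) + \gd_o = \gd_x = \Lap\monov$, and closedness of $L^\ad\cj L$ would extend this to all of $\LapE$.

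The main obstacle is precisely the monopole step: producing $u_n \in \spn\{v_y\}$ with $u_n \to w_o$ in $\HE$ and simultaneously $Lu_n = \sum_y a^n_y\gd_y - (\sum_y a^n_y)\gd_o \to \gd_o$ in $\ell^2$. Density of $\{v_y\}$ in $\HE$ (together with the fact that $w_o \in \Fin$ by its energy-minimizing property) handles the $\HE$-convergence, but arranging the $\ell^2$ limit requires delicate coefficient control, as one must drive $\sum_y a^n_y \gd_y \to 0$ in $\ell^2$ while simultaneously forcing $\sum_y a^n_y \to -1$. Once such a sequence is in hand, closedness of $\cj L$ delivers $w_o \in \dom \cj L$ with $\cj L w_o = \gd_o$, and since $\gd_o \in \dom K \ci \dom L^\ad$, one finishes with $L^\ad\cj L w_o = K\gd_o = \gd_o$, completing the proof.
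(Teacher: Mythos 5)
Your completed computations are correct and follow the same route as the paper: closability from the symmetric pair relation of Theorem~\ref{thm:KL-closable}, self-adjointness of $K^\ad\cj{K}$ and $L^\ad\cj{L}$ from von Neumann's theorem, and then direct verification on the generating vectors. For the $L$-half the paper argues weakly, computing $\la v_y, L^\ad\cj{L}v_x\ra_\energy = \la Lv_y, Lv_x\ra_2 = \la v_y, \gd_x-\gd_o\ra_\energy$ against the dense family $\{v_y\}$; your use of the containment $K \ci L^\ad$ to get $L^\ad L v_x = K(\gd_x-\gd_o) = \gd_x-\gd_o = \Lap v_x$ directly is equivalent and, if anything, cleaner, since it makes explicit at the outset why $Lv_x \in \dom L^\ad$. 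The $K$-half, which the paper leaves entirely implicit, you carry out correctly via Lemma~\ref{thm:<delta_x,v>=Lapv(x)}.

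The ``monopole obstacle'' you flag is real relative to the literal Definition~\ref{def:domLapE}, whose core is $\spn\{\monov\}_{x\in G} = \spn\{v_x\}_{x \in G} + \bR w_o$ rather than $\spn\{v_x\}_{x\in G}$: to conclude $\LapE \ci L^\ad\cj{L}$ one does need $w_o \in \dom\cj{L}$ with $\cj{L}w_o = \gd_o$, after which $L^\ad\cj{L}w_o = K\gd_o = \gd_o = \Lap w_o$ exactly as you say. You should be aware that the paper's own proof does not supply this step either --- it verifies agreement only on $\spn\{v_x\}$ and then asserts agreement on all of $\dom\LapE$ --- so your argument already matches the paper's level of completeness and is more candid about where the remaining work lies. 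Closing the gap requires producing $u_n \in \spn\{v_x\}$ with $u_n \to w_o$ in \HE and $Lu_n \to \gd_o$ in $\ell^2(G)$; density of $\spn\{v_x\}$ in \HE controls only the first convergence, so the coefficient bookkeeping you describe (driving $\sum_y a_y^n \gd_y \to 0$ in $\ell^2$ while $\sum_y a_y^n \to -1$) is genuinely needed and is where transience must enter. As submitted, then, the proposal is a faithful reconstruction of the paper's proof with one honestly declared open step; that step is a gap, but it is a gap the paper shares.
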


\begin{remark}\label{rem:Krein}
	In \cite{SRAMO}, it is shown that $\Lap_2$ is essentially self-adjoint, from which it follows that $K^\ad\cj{K}$ is the \emph{unique} self-adjoint extension of $\Lap_2$. It is shown in \cite{Krein} that $L^\ad \cj{L}$ is the \emph{Krein extension} of $\LapE$.
\end{remark}

\begin{proof}[Proof of Corollary~\ref{thm:sa-extensions}]
	Self-adjointness of these operators follows by a celebrated theorem of von Neumann once closability is established (which is given by Theorem~\ref{thm:KL-closable}). To establish that $\LapE \ci L^\ad\cj{L}$, note that the definitions give
	\begin{align*}
		\la v_y, L^\ad \cj{L} v_x\ra_\energy
		= \la Lv_y, L v_x\ra_2
		&= \la \gd_y-\gd_o, \gd_x-\gd_o\ra_2 \\
		&= (\gd_x-\gd_o)(y) - (\gd_x-\gd_o)(o) 
		= \la v_y, \gd_x-\gd_o\ra_\energy,
	\end{align*}
	for any $v_x$. This shows that the action of $L^\ad \cj{L}$ agrees with $\LapE$ on $\dom \LapE$.
\end{proof}

\begin{exm}\label{exm:l2-HE}
	If we take $\sH_1 = \ell^2(G)$, $\sH_2 = \HE$, and $\sD = \spn\{\gd\}_{x \in G}$, then the hypotheses of Theorem~\ref{thm:Krein-generalization} are satisfied. The only detail requiring effort to check is that $\spn\{v_x\}_{x \in G} \ci \sD^\ad$ (whence $\sD^\ad$ is dense in $\sH_2$). To see this, note that the reproducing property of $v_x$ gives
	\begin{align*}
		|\la \gf, v_x \ra_{\HE}| 
		&= |\gf(x) - \gf(o)|
		= |\la \gf, \gd_x - \gd_o\ra_{\ell^2}|
		\leq \|\gf\|_{\ell^2} \|\gd_x - \gd_o\|_{\ell^2} 
		= \sqrt2 \|\gf\|_{\ell^2},
	\end{align*}
	so one can take $C = 2$ in \eqref{eqn:D*}. In this case, the operator \gL is $\LapK$, the Krein extension of the energy Laplacian; see \cite{Krein, KellerLenz10}.
\end{exm}

{\small
\bibliographystyle{math}
\bibliography{networks}
}

\end{document}